\newtheorem{theorem}{Theorem}
\newtheorem{lemma}{Lemma}
\newtheorem{assumption}{Assumption}
\newtheorem{proposition}{Proposition}
\pgfplotsset{compat=newest} 
\pgfplotsset{plot coordinates/math parser=false}
\begin{document}
\title{\LARGE \bf
Modeling the Co-evolution of Climate Impact and Population Behavior: A Mean-Field Analysis
}

\author{Kathinka Frieswijk, Lorenzo Zino, A. Stephen Morse, and Ming Cao
\thanks{K. Frieswijk and M. Cao are with the Faculty of Science and Engineering, University of Groningen, Groningen, the Netherlands (\texttt{\{k.frieswijk,m.cao\}@rug.nl}). L. Zino is with the Department of Electronics and Telecommunications, Politecnico di Torino, Turin, Italy  (\texttt{lorenzo.zino@polito.it}). A.S. Morse is with the Department of Electrical Engineering, Yale University, New Haven, CT 06511, USA (\texttt{as.morse@yale.edu}). This work was partially supported by the European Research Council (ERC-CoG-771687) and by the US Air Force (grant n. FA9550-16-1-0290). 
}%
}

\maketitle

\begin{abstract}
\noindent Motivated by the climate crisis that is currently ravaging the planet, we propose and analyze a novel framework for the evolution of anthropogenic climate impact in which the evolution of human environmental behavior and environmental impact is coupled. Our framework includes a human decision-making process that captures social influence, government policy interventions, and the cost of environmentally-friendly behavior, modeled within a game-theoretic paradigm. By taking a mean-field approach in the limit of large populations, we derive the equilibria and their local stability characteristics. Subsequently, we study global convergence, whereby we show that the system converges to a periodic solution for almost all initial conditions. Numerical simulations confirm our findings and suggest that, before the system reaches such a periodic solution, the level of environmental impact might become dangerously high, calling for the design of optimal control strategies to influence the system trajectory. 
\end{abstract}


\section{Introduction}\label{sec:intro}

\noindent \IEEEPARstart{W}{ith} the rapid rise in temperature and extreme weather conditions registered in the last few years all around the globe, it is hard to deny that climate change is a serious threat to all life on our planet. In a myriad of ecosystems, the climate change crisis has already been the cause of substantial damages, and often irreversible losses of biodiversity~\cite{portner2022climate}. 

To mitigate the consequences of the climate crisis, a collective adoption of environmentally-responsible behavior is necessary~\cite{otto2020social}. However, even though there is an increasing global awareness that the climate crisis is real, dangerous, and occurring right now, such awareness has not yet translated into sufficiently resolute actions, able to decrease carbon dioxide emissions. On the contrary, preliminary data for the year 2022 suggest a relative increase in global fossil \ch{CO2} emission of $1.0\%$ compared to 2021~\cite{friedlingstein2022global}, thereby reaching an atmospheric \ch{CO2} concentration of 417.2 ppm, which is 51\% above pre-industrial levels of around 278 ppm.  

Toward predicting the collective adoption of sustainable practices, it is key to develop accurate models of the individual-level mechanisms that drive people to make decisions on their behavior. Evolutionary game theory has emerged as a powerful framework to develop such models~\cite{hofbauer1998evolutionary}. Of particular interest are feedback-evolving games in which the behavior of individuals influences the surrounding environment, while the environment, in turn, impacts the decision-making process~\cite{GONG2022110536,weitz2016oscillating}. 

Feedback-evolving games have proved useful in explaining dynamical phenomena in biological systems, such as resource harvesting and plant nutrient acquisition~\cite{tilman2020evolutionary}. However, such models do inherently oversimplify the complex and evolving nature of human behavior by assuming that individual decision-making is governed by a game whose payoff matrix depends linearly on the surrounding environment. Hence, such frameworks are
not amenable to the inclusion of nonlinear features due to the role of social influence, and furthermore, they do not explicitly consider the (potentially time-varying) implementation of policy interventions. Therefore, feedback-evolving games are limited in their practical applicability.

To address this gap, we propose a novel mathematical network model for the co-evolution of anthropogenic environmental impact and human behavior, where the decision-making process of individuals includes factors such as social influence, policy interventions, and the cost of environmentally responsible behavior. We propose a behavioral revision process in which individuals have a tendency to imitate individuals with a higher payoff~\cite{hofbauer1998evolutionary}, while they 
also prefer to conform to the behavioral norm of their social environment~\cite{cialdini2004social}. We formulate our model as a continuous-time Markov process. By taking a mean-field approach in the limit of large populations~\cite{virusspread}, we derive a deterministic approximation of our stochastic model in the form of a system of two coupled nonlinear ordinary differential equations. Then, we perform a theoretical analysis of the obtained system. In particular, we start by deriving local stability properties of the system equilibria. Subsequently, we utilize these results and an argument based on the Poincaré-Bendixson theorem in order to study the global asymptotic behavior of the system. Specifically, we prove that for (almost) every initial condition in the interior of the domain, the system converges to a periodic solution.  Numerical simulations are provided to illustrate our findings and to explore feedback control policies to mitigate risky oscillations in the transient behavior of the system.

The rest of this paper has the following organization. After presenting some notation below, we introduce our modeling framework in Section~\ref{sectionmodel}. In Section~\ref{sectionmf}, we take a mean-field approach in the limit or large populations and derive the mean-field system dynamics. Subsequently, in Section~\ref{sectionresults}, we present the analysis of the mean-field model and our main theoretical results. The paper is concluded by Section~\ref{sec:con}, which discusses future research avenues. 

\noindent \emph{Notation:} Let $\mathbb{R}$, $\mathbb{R}_{\ge 0}$, and $\mathbb{R}_{> 0}$ denote the set of real, real nonnegative, and strictly positive real numbers, respectively. If, for an event $E$, $$\lim_{\Delta t \searrow 0} \dfrac{{\mathbb{P}\big[E\text{ occurs during }(t,t+\Delta t)\big]}}{\Delta t} =\rho_E (t),$$ then we state that $E$ is triggered by a \emph{Poisson clock} with rate $\rho_E(t)$.

\section{Model}\label{sectionmodel}

\noindent We consider a population of $n$ individuals, denoted by $\mathcal{V}:=\{1,\hdots,n\}$. Each individual is represented by a vertex in a directed network $\mathcal{G}(t) := (\mathcal{V}, \mathcal{E})$, where $(i,j) \in \mathcal{E}$ if and only if (iff) $j$ has a social influence on the behavior of $i$. The neighbor set of $i$ is denoted by $\mathcal N_i : = \{j \in \mathcal{V} \ : \ (i,j) \in \mathcal{E} \},$ with cardinality $d_i:=|\mathcal N_i|$. The environmental behavior of an individual $i \in \mathcal{V}$ at time $t \in \mathbb{R}_{\ge 0}$ is captured by $x_i (t) \in \{0,1\}$, which represents whether $i$ is displaying environmentally responsible behavior $(x_i (t) = 1)$, or environmentally irresponsible behavior $(x_i (t) = 0)$. The states of all individuals are gathered into an $n$-dimensional vector $X(t):=\left[ x_1(t), x_2(t) , \dots, x_n(t) \right]\in\{ 0,1\}^{n}$, which represents the behavior of the entire population at time $t$.

\subsection{Environmental Impact}
\noindent For the past 50 years, anthropogenic $\ch{CO2}$ (i.e., the increase in the atmospheric value of $\ch{CO2}$ with respect to the pre-industrial value) has increased exponentially~\cite{hofmann2009new,friedlingstein2022global}. Therefore, we choose to model the evolution of the anthropogenic environmental impact $\varepsilon \in \mathbb{R}_{\ge 0}$ through the following linear, non-autonomous ordinary differential equation (ODE):
\begin{equation}\label{environmentimpact}
  \dot{\varepsilon} = r(t) \varepsilon, 
\end{equation}
where the rate of growth or decay at time $t$ is given by 
\begin{equation}\label{eq:rate}
r(t) : = \gamma\bar{x}_0 (t)  - \tau.  
\end{equation}
Here, the effect of environmentally irresponsible behavior is modeled by $\gamma\bar{x}_0 (t)$, with $\gamma \in \mathbb{R}_{>0}$ and where
\begin{equation}
  \bar{x}_0 (t):= \tfrac{1}{n}\big| \{i \in \mathcal{V} \ : \ x_i(t) = 0 \} \big|  
\end{equation}
denotes the fraction of people who behave irresponsibly in the population at time $t$. The parameter $\tau\in \mathbb{R}_{>0}$ represents efforts to reduce environmental impact via, e.g., massive tree-planting projects or negative emissions technologies. 


\subsection{Environmental Behavior}
\noindent Inspired by the decision-making process that was proposed in~\cite{ye2021game, frieswijk2022mean}, each individual $i\in \mathcal{V}$ chooses whether to behave environmentally responsibly following an evolutionary game-theoretic mechanism~\cite{hofbauer1998evolutionary}, which depends on the environmental impact, social influence, the cost of environmentally-friendly behavior, and governmental policy interventions such as awareness campaigns and environmental subsidies. In particular, for any $i \in \mathcal{V}$, we define the \textit{incentive} $\iota_1^{(i)}$ for environmentally responsible behavior as
\begin{subequations}\label{incentive}
 \begin{align}
    \iota_1^{(i)} (X(t),\varepsilon(t)) &:=\displaystyle\frac{1}{d_i} \sum_{j\in\mathcal N_i} x_j(t) + \mu \varepsilon(t) + \alpha  \,,\label{incentive1}\\
    \intertext{whereas} 
  \iota_0^{(i)} (X(t))&:=\displaystyle\frac{1}{d_i} \sum_{j\in\mathcal N_i} \big(1-x_j(t)\big) + \kappa - \sigma \,,\label{incentive0}
 \end{align}
\end{subequations}
denotes the incentive for environmentally irresponsible behavior. The behavioral incentives include several terms, whose meaning is detailed in the following.
\begin{description}
 \item[Social influence.] The first element in \eqref{incentive1}-\eqref{incentive0} represents social influence, where the incentive of $i \in \mathcal{V}$ to display certain behavior is higher when more of $i$'s neighbors act accordingly. This reflects the tendency of individuals to conform to their social environment~\cite{cialdini2004social}. Field experiments showed that social norms influence the behavior of individuals, e.g., curbside recycling behavior~\cite{schultz1999changing}. 
 \item[Environmental response.] The term $\mu \varepsilon(t) $, with $\mu \in \mathbb{R}_{> 0}$, models the response of the population to the environmental impact $\varepsilon (t)$, which is reflected in, e.g., global food price inflation and shortages~\cite{portner2022climate}. Here, we assume that the population's response increases linearly with the impact on the environment, regulated by the parameter $\mu$: the larger $\mu$ is, the faster the population reacts to environmental changes. However, one may consider more complex and nonlinear response functions, similar to~\cite{ye2021game}.
 \item[Cost.] The higher costs of behaving in an environmentally favorable fashion represent a barrier to ``green"  behavior of individuals~\cite{young2010sustainable}. Thus, the incentive for irresponsible behavior is bolstered by the cost of acting responsibly, represented by $\kappa \in \mathbb{R}_{>0}$. Such a term captures the (direct and indirect) economical costs associated with responsible behavior. 
 \item[Environmental subsidies.] Environmentally-friendly behavior is stimulated by government subsidies, modeled by reducing the cost for responsible behavior $\kappa$ by $\sigma \in [0,\kappa]$.
 \item[Awareness campaigns.]
 Besides the cost, another barrier to ecologically sustainable behavior is a lack of available information on how to act in a responsible fashion~\cite{young2010sustainable}. Awareness campaigns, modeled by the parameter $\alpha \in \mathbb{R}_{\ge 0}$, boost public knowledge and thereby increase the incentive to behave responsibly.
\end{description}
Individuals undergo a behavioral change according to a stochastic adaptation of classical \textit{imitation dynamics}, often employed in evolutionary game theory~\cite{hofbauer1998evolutionary,como2020imitation}. In particular, an individual $i \in \mathcal{V}$ who is behaving irresponsibly at time $t$ (i.e., $x_i (t)= 0$) will adopt responsible behavior if triggered by a Poisson clock with rate
\begin{subequations}\label{behaviourevolution}
 \begin{align}
   \rho_{01}^{(i)}(X(t),\varepsilon(t))&=\frac{1}{d_i}\sum_{j\in \mathcal N_i}x_j(t)\iota_1^{(j)}(X(t),\varepsilon(t))\,,\label{behaviourevolution_01}\\
 \intertext{whereas an individual $i \in \mathcal{V}$ who is displaying responsible behavior (i.e., $x_i(t)=1$) will cease to do so if triggered by a Poisson clock with rate} 
\rho_{10}^{(i)}(X(t))&=\frac{1}{d_i}\sum_{j\in \mathcal N_i}\big(1-x_j(t)\big)\iota_0^{(j)}(X(t))\,.\label{behaviourevolution_10}
 \end{align}
\end{subequations}
The revision protocol driven by the rates in \eqref{behaviourevolution} has an intuitive interpretation. Individuals interact with their neighbors and revise their own behavior imitating their neighbors with a probability proportional to the incentive associated with the corresponding behavior, similar to classical imitation dynamics~\cite{hofbauer1998evolutionary}. The proposed \textit{conformity-driven imitation dynamics} combines the incentive-driven behavioral tendencies of individuals with their propensity to conform to the behavioral norm of their social environment~\cite{cialdini2004social}. The above state transitions for an individual $i\in \mathcal{V}$ are shown in Fig.~\ref{fig:dynamics}.

To summarize, the behavioral-environmental feedback model is characterized by the coupling between i) a shared environment $\varepsilon(t)\in\mathbb R_{\geq 0}$, whose evolution is captured by the ODE in \eqref{environmentimpact} with the rate of growth/decay in \eqref{eq:rate}, and ii) the behavior $X(t)$ of a network of $n$ individuals, which is updated according to the revision protocol in \eqref{behaviourevolution}, with incentives from \eqref{incentive}.


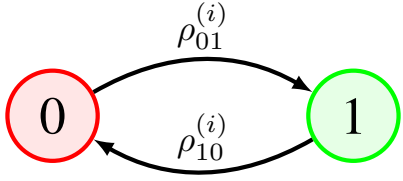
\begin{figure}
  \centering
  \begin{tikzpicture}
\node[draw=green, fill=green!10,circle, ultra thick,,minimum size=1.2cm] (1) at (4,-.5) {\huge 1};
\node[draw=red, fill=red!10,circle, ultra thick,,minimum size=1.2cm] (0) at (0,-.50) {\huge 0};
\path [->,>=latex,ultra thick] (0) edge[bend left =30]  node [above] {\scalebox{1.35}{$\rho_{01}^{(i)}$}} (1);
\path [->,>=latex,ultra thick] (1) edge[bend left =30]  node [above] {\scalebox{1.35}{$\rho_{10}^{(i)}$}} (0);
\end{tikzpicture}
  \caption{The transition rates between the behavioral states $x_i = 0$ and $x_i=1$ for a generic individual $i \in \mathcal{V}$.}
  \label{fig:dynamics}
\end{figure}

\section{Mean-Field Dynamics}\label{sectionmf}

\noindent All of the Poisson clocks associated with the individuals' behavioral transitions are independent. Hence, the population's behavioral state $X(t)\in\{0,1\}^n$ evolves according to a non-homogeneous continuous-time Markov process~\cite{levin2006book}. Specifically, for any $i \in \mathcal{V}$, the transition rate matrix is given by 
\begin{equation}\label{eq:Q}
Q_i(X(t),\varepsilon(t))=\begin{bmatrix}
-\rho_{01}^{(i)}(X(t),\varepsilon(t)) & \rho_{01}^{(i)}(X(t),\varepsilon(t)) \\
\rho_{10}^{(i)}(X(t)) & - \rho_{10}^{(i)}(X(t))
\end{bmatrix},
\end{equation}
where the first and second row/column correspond to the state $x_i =0$ and $x_i =1$, respectively. Thus, the probability that any $i \in \mathcal{V}$ transitions from behavior $y\in\{0,1\}$ to $z\in\{0,1\}$ at time $t$, with $y\neq z$, is given by
$$
\mathbb P[x_i(t+\Delta t)=z\,|\,x_i(t)=y]=\big(Q_i(X(t),\varepsilon(t))\big)_{yz}\Delta t+o(\Delta t)\,,
$$
where  $o(\Delta t)$ is the Landau little-o notation for $\Delta t \searrow 0$. 

From the explicit expression of the transition rate matrix $Q_i(X(t),\varepsilon(t))$, we realize that all its entries are dependent on the behavior of the other population members (i.e., $X(t)$) through the dependency on the state of the neighboring nodes in \eqref{behaviourevolution} and \eqref{incentive}. Moreover, the transition rates are non-homogeneous, since the first row is also dependent on $\varepsilon(t)$. The complexity of the transition matrix $Q_i(X(t),\varepsilon(t))$ and the fact that the size of the state space $\{0,1\}^n$ increases exponentially with the population size $n$ make a direct analysis of the non-homogeneous Markov process $X(t)$ unfeasible for large-scale populations. 

Following the seminal work in~\cite{virusspread}, we take a mean-field approach in the limit $n \to \infty$. In this approach, instead of studying the actual evolution of the state of each individual $x_i(t)$, we study the probability for any $i \in \mathcal{V}$ to act irresponsibly and responsibly, defined as $p_0^{(i)}(t) : = \mathbb{P}\left[ x_i(t) = 0 \right]$ and $p_1^{(i)}(t) : = \mathbb{P}\left[ x_i(t) = 1 \right]$, respectively. For any $i\in\mathcal{V}$, the evolution of the mean-field dynamics for $p_0^{(i)}(t)$ and $p_1^{(i)}(t)$ is governed by a system of (non-autonomous) ODEs, obtained by using the fact that $\mathbb E[x_i(t)=1]=p_1^{(i)}(t)$. By replacing the vector $X(t)$ with the vector $p_1(t):=[p_1^{(1)}(t),\dots,p_1^{(n)}(t)]$ in \eqref{eq:Q}, and by using the Chapman-Kolmogorov equation~\cite{levin2006book}, we obtain $[\dot p_0^{(i)}\,\dot p_1^{(i)}]=[p_0^{(i)}\,p_1^{(i)}]Q_i(p_1(t),\varepsilon(t))$, which yields
  \begin{equation}\label{eq:meanfield}
  \begin{alignedat}{1}
     \dot p_0^{(i)} &= - \rho_{01}^{(i)}(p_1(t),\varepsilon(t))  p_0^{(i)}+ \rho_{10}^{(i)}(p_1(t)) p_1^{(i)}\,,\\
     \dot p_1^{(i)} &= \rho_{01}^{(i)}(p_1(t),\varepsilon(t)) p_0^{(i)}-\rho_{10}^{(i)}(p_1(t)) p_1^{(i)}\,,
     \end{alignedat}
   \end{equation}
 for any $i\in \mathcal{V}$. Note that system \eqref{eq:meanfield} is non-autonomous due to the dependency of  $\rho_{01}^{(i)}$ on $\varepsilon(t)$ and, ultimately, on $t$, while the other terms depend only on $t$ through the state $p_1(t)$.
   
Despite such a dependency on $\varepsilon(t)$, we can provide a general invariance result for the system in \eqref{eq:meanfield}, provided that $\varepsilon(t)$ is bounded and Lipschitz. Note that if $\varepsilon(t)$ is defined via \eqref{environmentimpact}, then it is necessarily Lipschitz since it is the solution of an ODE. The following lemma shows that, for any function $\varepsilon(t)$ that has these properties, $( p_0^{(i)} \ p_1^{(i)} ) $ is well-defined as a probability vector for all $t \in \mathbb{R}_{\ge 0}$ and for all $i\in \mathcal{V}$.

\begin{lemma}\label{lemma1} Assume that $\varepsilon(t)$ is  Lipschitz-continuous and bounded for any $t\in \mathbb R_{\geq 0}$. Then, for all $i \in \mathcal{V}$, the set
$\{ ( p_0^{(i)} \ p_1^{(i)} ) : p_0^{(i)}, p_1^{(i)}\ge 0, \ p_0^{(i)}+ p_1^{(i)} = 1 \} $ is positive invariant under \eqref{eq:meanfield}.
\end{lemma}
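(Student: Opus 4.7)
The plan is to establish invariance by combining a conservation-law argument for the affine constraint $p_0^{(i)}+p_1^{(i)}=1$ with a Nagumo-type sub-tangentiality check for the nonnegativity constraints. First, I would note that since $\varepsilon(t)$ is assumed Lipschitz and bounded, the right-hand side of \eqref{eq:meanfield}---a polynomial in the components of $p_1(t)$ whose time dependence enters only through $\varepsilon(t)$---is locally Lipschitz in the state and continuous in $t$. Standard ODE theory then yields existence and uniqueness of a local solution from any initial condition in the candidate set $\mathcal{S}:=\{(p_0^{(i)},p_1^{(i)})_{i\in\mathcal V}:p_0^{(i)},p_1^{(i)}\ge 0,\ p_0^{(i)}+p_1^{(i)}=1\ \forall i\}$, so the invariance question is well posed.

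Next, summing the two equations in \eqref{eq:meanfield} gives $\dot p_0^{(i)}+\dot p_1^{(i)}=0$ for every $i\in\mathcal V$, so the affine constraint is preserved along any trajectory. For the inequality constraints I would invoke Nagumo's theorem on $\mathcal{S}$: it suffices to check that the vector field is sub-tangential to $\mathcal{S}$ at each boundary point. Two cases arise. If $p_0^{(i)}=0$ for some $i$, then necessarily $p_1^{(i)}=1$, and \eqref{eq:meanfield} reduces to $\dot p_0^{(i)}=\rho_{10}^{(i)}(p_1(t))$; dually, if $p_1^{(i)}=0$, then $p_0^{(i)}=1$ and $\dot p_1^{(i)}=\rho_{01}^{(i)}(p_1(t),\varepsilon(t))$. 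On $\mathcal{S}$ we have $p_1^{(j)}\in[0,1]$ for every $j$, and together with $\varepsilon(t)\ge 0$ (inherited from the model), $\alpha\ge 0$, and $\kappa-\sigma\ge 0$, the incentives $\iota_0^{(j)}$ and $\iota_1^{(j)}$ in \eqref{incentive} are nonnegative; hence the rates in \eqref{behaviourevolution}, being nonnegative linear combinations of these quantities, are themselves nonnegative. Both $\dot p_0^{(i)}\ge 0$ and $\dot p_1^{(i)}\ge 0$ then hold on the corresponding pieces of $\partial\mathcal{S}$, which is precisely Nagumo's sub-tangentiality condition.

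Combining the affine conservation with the sub-tangential inequalities, Nagumo's theorem yields positive invariance of $\mathcal{S}$. Since $\mathcal{S}$ is compact, the local solutions remain bounded and extend to all $t\in\mathbb R_{\ge 0}$.

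The main subtle point---rather than a real obstacle---is the apparent circularity: nonnegativity of $\rho_{01}^{(i)}$ and $\rho_{10}^{(i)}$ relies on $p_1\in[0,1]^n$, which is exactly what we want to prove. Nagumo's theorem is tailored to this situation, because one only needs to verify the boundary condition on the target set itself rather than in some ambient open neighborhood, so the circularity dissolves. A secondary housekeeping point is justifying $\varepsilon(t)\ge 0$; this is immediate from \eqref{environmentimpact}, as $\varepsilon=0$ is an equilibrium of the scalar linear ODE $\dot\varepsilon=r(t)\varepsilon$ and thus a nonnegative initial condition yields $\varepsilon(t)\ge 0$ for all $t$.
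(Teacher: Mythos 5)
Your proof is correct and follows essentially the same route as the paper's: the conservation law $\dot p_0^{(i)}+\dot p_1^{(i)}=0$ for the affine constraint, plus Nagumo's theorem with a sub-tangentiality check at $p_0^{(i)}=0$ and $p_1^{(i)}=0$. You simply spell out more explicitly than the paper why the rates $\rho_{01}^{(i)}$ and $\rho_{10}^{(i)}$ are nonnegative on the set itself, which is a welcome (and needed) detail rather than a departure.
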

\begin{proof}
Consider any $i \in \mathcal{V}$. First, we observe that $\dot p_0^{(i)} + \dot p_1^{(i)} = 0$, so $p_0^{(i)}+ p_1^{(i)} = 1$ for all $t \in \mathbb{R}_{\ge 0}$. Then, since the vector field in \eqref{eq:meanfield} is Lipschitz-continuous, Nagumo’s Theorem can be applied~\cite{blanchini1999set}. We are left to check the value of the field at the boundary of the domain. Note that $\dot p_0^{(i)} \ge 0$ if $p_0^{(i)} = 0$ and $\dot p_1^{(i)} \ge 0$ if $p_1^{(i)} = 0$, so $p_0^{(i)}, p_1^{(i)}\ge 0$ for all $t \in \mathbb{R}_{\ge 0}$. 
\end{proof}

It follows directly from Lemma~\ref{lemma1} that, for any $i \in \mathcal{V}$, only one of the two equations in \eqref{eq:meanfield} is sufficient to describe the behavioral evolution of individual $i$. Hence, the mean-field dynamics of the population behavior ultimately consist of an $n$-dimensional set of non-autonomous ODEs. 

Next, let us define the average probability for a randomly selected individual to act responsibly at time $t$,
\begin{equation}\label{macro}
 x(t):= \dfrac{1}{n}\sum_{i \in \mathcal{V}} p_1^{(i)}(t)\,.
\end{equation}
Let $\bar{x}_1(t):= \tfrac{1}{n}\big| \{i \in \mathcal{V} \ : \ x_i(t)=1 \} \big|$ denote the fraction of individuals who behave responsibly at time $t$. For large-scale populations, the fraction $\bar{x}_1(t)$ can be approximated by the macroscopic variable $x(t)$ with arbitrary accuracy (while the two quantities coincide in the limit $n\to\infty$) for any finite time horizon~\cite{limittheo,zino2}, allowing us to accurately study the population behavior from a macroscopic perspective. Finally, in the mean-field approach, the temporal evolution of the whole system is captured by the coupling between i) the system of $n$ independent ODEs defined by \eqref{eq:meanfield} that governs the behavioral evolution of all individuals in the population, and ii) the mean-field dynamics of the environmental impact, which is obtained by replacing the term $\bar x_0(t)$ in \eqref{eq:rate} with the corresponding macroscopic variable $1-x(t)$. Such a coupling reads
\begin{alignat}{1}\label{eq:meanfield_system}
         \dot p_1^{(i)}& = \rho_{01}^{(i)}(p_1(t),\varepsilon(t)) (1-p_1^{(i)})-\rho_{10}^{(i)}(p_1(t)) p_1^{(i)}\,,\forall i\in\mathcal V,\notag\\
  \dot\varepsilon \hspace{6pt} &=\Big(\gamma \Big[1-\dfrac{1}{n}\sum_{i \in \mathcal{V}} p_1^{(i)}(t)\Big]-\tau\Big)\varepsilon,
\end{alignat}
which is an autonomous system of $n+1$ ordinary differential equations.

In the rest of this paper, we study the system in \eqref{eq:meanfield_system} under the following simplifying assumption, to allow for the theoretical analysis of the model.

\begin{assumption}\label{assumptionsimple}
For any $i \in \mathcal{V}$, we assume that $i$ is influenced by the entire population, i.e., $\mathcal N_i=\mathcal V$ for all $i \in \mathcal{V}$.
\end{assumption}
Under Assumption \ref{assumptionsimple}, the incentive functions in \eqref{incentive} reduce to
\begin{equation}\label{meanfieldincentives}
\begin{alignedat}{1}
\iota_1 (x(t),\varepsilon(t)) &:=x(t) + \mu \varepsilon(t) + \alpha\,, \\
    \iota_0 (x(t))& :=1-x(t) + \kappa - \sigma\,,
  \end{alignedat}
\end{equation}
where the index $i$ was discarded, as the incentive functions are no longer individual-dependent and the dependency on the state of other nodes reduces to a dependency on the macroscopic variable $x(t)$. 

Before analyzing the obtained system, we will now make some realistic assumptions on the incentive functions in \eqref{meanfieldincentives} and the mean-field rate of growth/decay $
\bar{r}(x(t)) : = \gamma[1-x(t)]  - \tau$. First, we would like to point out that, currently, none of the negative emission technologies has been demonstrated to be effective at a sufficiently large scale~\cite{rau2019race}. Hence, it is natural to assume that the environmental impact increases if the entire population behaves irresponsibly, i.e., if $x(t) =0$ at time $t$, then $\bar{r}(x(t))>0$. Secondly, concerning the incentive functions, it is reasonable to assume that if there is no environmental impact (i.e., $\varepsilon (t) = 0$), then $ \iota_0 (t) > \iota_1 (t) $ for any $x \in [0,1]$. These two observations directly lead to the following conditions on the parameter $\tau$ and on the cost of responsible behavior $\kappa$. 
\begin{assumption}\label{as:cost}
We make the assumption that: (i) $\tau<\gamma$ and (ii) $\kappa > \sigma +\alpha + 1$.
\end{assumption}
Under Assumption~\ref{assumptionsimple}, we derive the macroscopic mean-field evolution of the population in the following proposition. 

\begin{proposition}\label{prop:system}
Under Assumption~\ref{assumptionsimple}, and in the limit of large-scale populations $n\to\infty$, the mean-field evolution of the macroscopic variable $x$ and the environmental impact $\varepsilon$ is governed by the following autonomous planar system of ODEs:
\begin{equation}\label{simplesystem}
  \begin{array}{lll}
    \dot{x} & =& x(1-x) (2x + \mu \varepsilon +\alpha + \sigma - \kappa -1)\,,\\
  \dot{\varepsilon} & =& (\gamma (1-x) - \tau) \varepsilon\,.
\end{array} 
\end{equation}
\end{proposition}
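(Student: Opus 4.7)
The plan is to substitute the simplifications implied by Assumption~\ref{assumptionsimple} into the general mean-field system \eqref{eq:meanfield_system} and verify that the resulting dynamics admit a closed description in terms of the two macroscopic variables $x(t)$ and $\varepsilon(t)$. First, I would observe that under the all-to-all coupling $\mathcal N_i=\mathcal V$ with $d_i=n$, the incentives in \eqref{incentive} no longer depend on either $i$ or $j$ and reduce to the expressions $\iota_1(x(t),\varepsilon(t))$ and $\iota_0(x(t))$ stated in \eqref{meanfieldincentives}. Consequently, the inner products in \eqref{behaviourevolution_01}--\eqref{behaviourevolution_10} factor, and invoking $\tfrac{1}{n}\sum_{j\in\mathcal V}p_1^{(j)}(t)=x(t)$ from \eqref{macro} yields
\begin{equation*}
\rho_{01}^{(i)}(p_1(t),\varepsilon(t))=x(t)\,\iota_1(x(t),\varepsilon(t)),\qquad \rho_{10}^{(i)}(p_1(t))=(1-x(t))\,\iota_0(x(t)),
\end{equation*}
which are identical across all $i\in\mathcal V$.

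Next, I would average the $n$ individual equations in \eqref{eq:meanfield_system} over $i$ to derive a single scalar ODE for $x(t)$. Because the two rates above are independent of the index $i$, this averaging gives
\begin{equation*}
\dot x(t) = \rho_{01}(1-x(t)) - \rho_{10}\, x(t) = x(t)(1-x(t))\bigl[\iota_1(x(t),\varepsilon(t))-\iota_0(x(t))\bigr].
\end{equation*}
Substituting \eqref{meanfieldincentives} leaves $\iota_1-\iota_0 = 2x+\mu\varepsilon+\alpha+\sigma-\kappa-1$, producing the first equation of \eqref{simplesystem}. The second equation follows verbatim from the environmental row of \eqref{eq:meanfield_system}, after replacing $\tfrac{1}{n}\sum_i p_1^{(i)}$ by $x(t)$.

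Finally, to justify the passage to the $n\to\infty$ limit and the identification of $x(t)$ with the macroscopic fraction of responsible individuals, I would invoke the mean-field concentration results cited in the paragraph preceding \eqref{eq:meanfield_system} (in particular \cite{limittheo,zino2}), which ensure that $x(t)$ and $\bar x_1(t)$ coincide on any finite horizon as $n\to\infty$. The most delicate point in the argument is the closure step: in general, averaging microscopic mean-field ODEs does not produce equations in the macroscopic variable alone, since the right-hand side may a priori depend on higher moments of the $p_1^{(i)}$. What makes the closure work here is precisely Assumption~\ref{assumptionsimple}, which forces the rates $\rho_{01}^{(i)}$ and $\rho_{10}^{(i)}$ to depend on the microscopic state $p_1(t)$ only through $x(t)$, so that the averaged system closes exactly on the planar form~\eqref{simplesystem}.
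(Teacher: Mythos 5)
Your proposal is correct and follows essentially the same route as the paper's own (much terser) proof: under Assumption~\ref{assumptionsimple} the rates reduce to $\rho_{01}^{(i)}=x\,\iota_1(x,\varepsilon)$ and $\rho_{10}^{(i)}=(1-x)\,\iota_0(x)$, and averaging $\dot p_1^{(i)}$ over $i$ via \eqref{macro} yields \eqref{simplesystem}. Your additional remark on why the moment closure is exact here is a useful elaboration but not a different argument.
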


\begin{proof} Note that under Assumption~\ref{assumptionsimple}, \eqref{behaviourevolution} reduces to $\rho_{01}^{(i)}=x(x + \mu \varepsilon + \alpha )$ and $\rho_{10}^{(i)}=(1-x)(1-x + \kappa - \sigma)$ for all $i \in \mathcal{V}$. Using this, system \eqref{simplesystem} immediately follows from \eqref{macro} and \eqref{eq:meanfield_system} by substituting the expressions in \eqref{eq:meanfield_system} in $ \dot{x}(t)= \tfrac{1}{n}\sum_{i \in \mathcal{V}} \dot{p_1}^{(i)}(t)$.
\end{proof}
To show that system \eqref{simplesystem} is well-defined, we will first show that the environmental impact is bounded from above for any initial condition $x(0) \in (0,1)$. 
\begin{lemma}\label{lemma2} Under \eqref{simplesystem}, there exists an $\bar{\varepsilon} \in \mathbb{R}_{>0}$ such that $\varepsilon(t) \le \bar{\varepsilon}$ for all $(x(0), \varepsilon(0)) \in (0,1) \times \mathbb{R}_{\ge 0}$ and for all $t \in \mathbb{R}_{\ge 0}$. 
\end{lemma}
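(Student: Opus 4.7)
The plan is to exhibit a Lyapunov-type function for \eqref{simplesystem} whose derivative along trajectories becomes strictly negative once $\varepsilon$ is large enough. The natural candidate is
\[ V(x, \varepsilon) := \varepsilon - \tfrac{\gamma}{\mu} \ln x, \]
defined on $(0, 1] \times \mathbb{R}_{\ge 0}$. Two structural features make $V$ useful: since $\ln x \le 0$ for $x \in (0, 1]$, one has $V \ge \varepsilon$ throughout the domain, so any upper bound on $V$ yields an upper bound on $\varepsilon$; and the coefficient $\gamma/\mu$ in front of $\ln x$ is chosen precisely so that the $\mu\varepsilon$ contribution of $\dot x/x$ cancels the $\gamma(1-x)\varepsilon$ contribution of $\dot\varepsilon$ when one differentiates $V$ along the flow.

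The first step is to compute $\dot V$ along solutions of \eqref{simplesystem}. Using $\dot x/x = (1-x)(2x + \mu\varepsilon + \alpha + \sigma - \kappa - 1)$ and $\dot\varepsilon/\varepsilon = \gamma(1-x) - \tau$, the cancellation noted above yields
\[ \dot V = -\tau\, \varepsilon + \tfrac{\gamma}{\mu}\,(1-x)\,(\kappa + 1 - \alpha - \sigma - 2x). \]
By Assumption~\ref{as:cost}(ii), $\kappa - \alpha - \sigma > 1$, so $\kappa + 1 - \alpha - \sigma - 2x > 0$ for every $x \in [0,1]$, and the second term is therefore non-negative. Since the factor $(1-x)(\kappa + 1 - \alpha - \sigma - 2x)$ is strictly decreasing on $[0,1]$ (its derivative is $4x - (\kappa + 1 - \alpha - \sigma) - 2$, which is negative there), it is maximized at $x = 0$. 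Setting $M := (\gamma/\mu)(\kappa + 1 - \alpha - \sigma)$ gives the clean differential inequality
\[ \dot V(t) \le -\tau\,\varepsilon(t) + M. \]

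The final step is to promote this pointwise inequality to a uniform-in-time bound. Whenever $\varepsilon(t) > M/\tau$, we have $\dot V(t) < 0$, so $V$ is strictly decreasing on every time interval during which the trajectory lies in the region $\{\varepsilon > M/\tau\}$. Combined with the relation $\varepsilon \le V$, this rules out escape of $\varepsilon$ to infinity: each excursion above the threshold $M/\tau$ must strictly decrease $V$, so the maximum value of $\varepsilon$ attained during that excursion is bounded by $V$ evaluated at the entry point. Concretely, one takes $\bar\varepsilon$ to be the maximum of $M/\tau$ and the supremum of $V$ at entry times into $\{\varepsilon > M/\tau\}$ along the forward orbit starting from $(x(0), \varepsilon(0))$.

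The hard part will be making this last step rigorous, since on the complementary set $\{\varepsilon \le M/\tau\}$ the inequality for $\dot V$ only gives $\dot V \le M$, so $V$ could a priori grow there via $-\ln x$ blowing up, which would inflate the entry values of $V$ on later excursions. To close this gap I would invoke the forward invariance of $(0, 1) \times \mathbb{R}_{\ge 0}$ together with the observation that the two boundary equilibria $(0, 0)$ and $(1, 0)$ of \eqref{simplesystem} are hyperbolic saddles whose stable manifolds lie on the invariant axes $\{\varepsilon = 0\}$ and $\{x = 1\}$ respectively. Consequently no interior trajectory can accumulate on them, so $x(t)$ stays bounded away from $0$ along the forward orbit from any interior initial condition. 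A careful bookkeeping of the successive crossings of the level set $\{\varepsilon = M/\tau\}$ then delivers the desired finite $\bar\varepsilon$.
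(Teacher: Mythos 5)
Your computation is correct up to and including the differential inequality: the cancellation in $\dot V$ is exactly as you describe, and
\[
\dot V \;=\; -\tau\varepsilon + \tfrac{\gamma}{\mu}(1-x)\bigl(\kappa+1-\alpha-\sigma-2x\bigr)\;\le\; -\tau\varepsilon + M,\qquad M:=\tfrac{\gamma}{\mu}(\kappa+1-\alpha-\sigma),
\]
is a genuinely different (and elegant) starting point from the paper, which instead argues by contradiction using the two nullclines $x=1-\tau/\gamma$ and $\varepsilon=\tfrac{1}{\mu}(\kappa+1-\alpha-\sigma)$ together with Gr\"onwall estimates on the growth rates of $x$ and $\varepsilon$. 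However, the step you yourself flag as ``the hard part'' is a genuine gap, and the route you sketch for closing it does not work as stated. The inequality controls $V$ only on $\{\varepsilon>M/\tau\}$; elsewhere $V$ can grow through $-\tfrac{\gamma}{\mu}\ln x$, and your candidate $\bar\varepsilon$ (the supremum of $V$ over entry times into $\{\varepsilon>M/\tau\}$) is finite only if $x(t)$ is uniformly bounded away from $0$ along the whole forward orbit. Note that $x$ decreases precisely when $\mu\varepsilon<\kappa+1-\alpha-\sigma-2x$, i.e., exactly in the region where your inequality gives no decay of $V$. The assertion ``no interior trajectory can accumulate on the boundary saddles, hence $x(t)$ stays bounded away from $0$'' is a non sequitur: non-convergence to the two saddle points does not exclude $\liminf_{t\to\infty}x(t)=0$, for instance via successive spiral loops passing ever closer to the segment $\{x=0,\ \varepsilon>0\}$, which contains no equilibrium at all. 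Worse, the standard tools one would invoke to rule out accumulation on the boundary ($\omega$-limit set invariance, Poincar\'e--Bendixson) presuppose precompactness of the forward orbit --- which is precisely the boundedness Lemma~\ref{lemma2} is meant to establish, so this part of the argument is at risk of circularity.

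To close the gap you need a quantitative lower bound on $x$ at the entry times into $\{\varepsilon>M/\tau\}$, obtained without assuming boundedness. A possible handle is that $\tfrac{d}{dt}\ln x=(1-x)\bigl(2x+\mu\varepsilon-(\kappa+1-\alpha-\sigma)\bigr)\ge -(\kappa+1-\alpha-\sigma)$, so $\ln x$ decreases at a rate bounded by a constant; pairing this with an upper bound on the time a trajectory can spend in the region where $\dot x<0$ (there $\varepsilon$ grows, and once $\varepsilon$ exceeds $\tfrac{1}{\mu}(\kappa+1-\alpha-\sigma)$ the sign of $\dot x$ flips) would give the control on $x(t_{\mathrm{entry}})$ that your construction of $\bar\varepsilon$ requires. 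As a minor additional point, the uniform-in-initial-condition phrasing of the statement cannot literally hold (take $\varepsilon(0)$ arbitrarily large); like the paper's own argument, yours should be read as producing a bound $\bar\varepsilon$ depending on $(x(0),\varepsilon(0))$.
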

\begin{proof}
Observe that $\dot{\varepsilon}<0$ for any $x>\hat{x}$ and $\varepsilon>0$, where $\hat{x}$ is the solution of $\gamma(1-x) - \tau =0$. Let $\hat{\varepsilon}$ be the solution of $\mu \varepsilon +\alpha + \sigma - \kappa -1 =0$. Then, $\dot{x} >0$ for any $x \in (0,1)$ and $\varepsilon>\hat{\varepsilon}$. We will now use proof by contradiction to show that $\varepsilon(t)$ is bounded for all $t \in \mathbb{R}_{\ge 0}$ and for all $(x(0), \varepsilon(0)) \in (0,1) \times \mathbb{R}_{\ge 0}$. Assume that $\varepsilon(t)$ is unbounded. Then, for any $M>0$ there exists a time $t$ such that $\varepsilon(t)>M$. Let us consider a time $\bar{t}$ such that $\varepsilon(\bar{t})> \hat{\varepsilon}$ and $x(\bar{t}) \in (0,1)$, so $\dot{x}(\bar{t})>0$. Note that $\dot{x}( \bar{t})  \ge q(1-x),$ for some constant $q>0$ and $x \in (0,1)$. By the Gr\"{o}nwall-Bellman inequality~\cite{pachpatte1997inequalities}, we find that $x(t) \ge x(\bar{t}) e^{q(t-\bar{t})}$ for all $t \ge \bar{t}$. Hence, there exists a $ t^{\ast}$ such that $x(\bar{t}+ t^{\ast}) > \hat{x}$, so $\dot{\varepsilon}(\bar{t}+ t^{\ast})<0$. Note that for any $x \in [0,1]$ and $\varepsilon \in \mathbb{R}_{> 0}$, we have $\dot{\varepsilon} < \gamma \varepsilon$, so the Gr\"{o}nwall-Bellman inequality yields $\varepsilon (t) < \varepsilon(\bar{t})e^{\gamma(t-\bar{t})}$ for all $t \ge \bar{t}$. Thus, $\varepsilon (\bar{t}+ t^{\ast}) < \varepsilon(\bar{t})e^{\gamma t^{\ast}}$. Since $\dot{\varepsilon}(\bar{t}+ t^{\ast})<0$, there does not exist a time $t$ such that $\varepsilon(t)> M $ for any $M> \varepsilon(\bar{t})e^{\gamma t^{\ast}}$. 
\end{proof}
Using the above result, the following lemma shows that \eqref{simplesystem} is well-defined for all $t \in \mathbb{R}_{\ge 0}$. 
\begin{lemma}\label{lemma3}  The set $(x,\varepsilon) \in [0,1]\times \mathbb{R}_{\ge 0} $ is positive invariant under \eqref{simplesystem}.
\end{lemma}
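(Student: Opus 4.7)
The plan is to mimic the argument used for Lemma~\ref{lemma1}: establish that the vector field in \eqref{simplesystem} is locally Lipschitz (immediate, since it is a polynomial in $x$ and $\varepsilon$), verify tangentiality/inward-pointing on the boundary of $[0,1]\times\mathbb{R}_{\ge 0}$ via Nagumo's theorem, and then invoke Lemma~\ref{lemma2} (supplemented by direct integration on the degenerate boundary lines) to rule out finite-time blow-up and thereby guarantee that trajectories are defined globally in forward time.

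Concretely, I would check the three boundary pieces separately. On $\{x=0\}$ the $x$-component of the field vanishes, leaving $\dot\varepsilon=(\gamma-\tau)\varepsilon$, so this line is invariant and the solution $\varepsilon(t)=\varepsilon(0)e^{(\gamma-\tau)t}$, which is well-defined on $\mathbb{R}_{\ge 0}$ (Assumption~\ref{as:cost}(i) only fixes the sign of the exponent; global existence is unaffected). On $\{x=1\}$ the $x$-component vanishes again and $\dot\varepsilon=-\tau\varepsilon$, so this line is also invariant and $\varepsilon(t)=\varepsilon(0)e^{-\tau t}$. On $\{\varepsilon=0\}$ the $\varepsilon$-component vanishes, and $\dot x = x(1-x)(2x+\alpha+\sigma-\kappa-1)$, for which $x=0$ and $x=1$ are equilibria, so the interval $[0,1]$ is invariant for this one-dimensional flow. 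In each case the vector field is tangent to the respective boundary component, so by Nagumo's theorem trajectories starting on or inside $[0,1]\times\mathbb{R}_{\ge 0}$ cannot cross it.

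It remains to rule out finite-time escape to $\varepsilon=+\infty$ for interior initial conditions $(x(0),\varepsilon(0))\in(0,1)\times\mathbb{R}_{\ge 0}$. This is precisely the content of Lemma~\ref{lemma2}, which guarantees $\varepsilon(t)\le\bar\varepsilon$ for all $t\ge 0$. Combined with the tangency analysis on the boundary, this yields global existence of the solution in $[0,1]\times\mathbb{R}_{\ge 0}$ for every initial condition in that set, which is exactly positive invariance.

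The only mild obstacle is the bookkeeping on the boundary lines $x\in\{0,1\}$, where Lemma~\ref{lemma2} does not directly apply (its hypothesis is $x(0)\in(0,1)$); but there the dynamics decouple and admit closed-form solutions as shown above, so the obstacle disappears. I do not expect any hidden difficulty; the proof is essentially a one-paragraph verification that every candidate exit direction on $\partial([0,1]\times\mathbb{R}_{\ge 0})$ is forbidden by the structure of the field.
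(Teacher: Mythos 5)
Your proof is correct and takes essentially the same route as the paper's: the interior case is settled by the boundedness result of Lemma~\ref{lemma2} (the paper additionally invokes Lemma~\ref{lemma1}, where you instead run a direct Nagumo tangency check on the planar field), and the boundary lines $x=0$ and $x=1$ are integrated explicitly. Your explicit treatment of the $\varepsilon=0$ edge is a minor addition that the paper leaves implicit.
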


\begin{proof}
It follows directly from Lemma~\ref{lemma1} and \ref{lemma2} that for any initial condition with $x(0) \in (0,1)$, $(x,\varepsilon) \in [0,1]\times \mathbb{R}_{\ge 0} $ for all $t \in \mathbb{R}_{\ge 0}$. It remains to analyze the behavior of the system on the boundaries. For $x(0)=0$, the dynamics reduce to $\dot x=0$ and $\dot\varepsilon=(\gamma-\tau)\varepsilon$. Hence, the solution of the system is given by $x(t) = 0$ and $\varepsilon(t)=\varepsilon(0)e^{(\gamma-\tau)t}$, which is in the set for all $t \in \mathbb{R}_{\ge 0}$. For $x(0)=1$, the dynamics reduce to $\dot x=0$ and $\dot\varepsilon=-\tau\varepsilon$, which converges exponentially to the origin, belonging to the set. 
\end{proof}

\section{Main Results}\label{sectionresults}

\noindent In this section, we perform an analysis of the planar mean-field system in \eqref{simplesystem} to fully unveil its asymptotic behavior. We start by characterizing the equilibria of \eqref{simplesystem} and establishing their local stability properties, which are presented in the following proposition. 

\begin{proposition}\label{propequi}
Under Assumption~\ref{as:cost}, the system in \eqref{simplesystem} has three equilibria:
\begin{enumerate}
    \item[i)] $(x, \varepsilon) = (0,0)$, which is a saddle point;
    \item[ii)] $(x, \varepsilon) =(1,0)$, which is a saddle point;
    \item[iii)] $(x, \varepsilon) =(1- \tfrac{\tau}{\gamma},\tfrac{1}{\mu} [\tfrac{2 \tau}{\gamma} + \kappa -\sigma - \alpha -1])$, which is an unstable spiral.
  \end{enumerate}
\end{proposition}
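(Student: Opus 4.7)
I would follow the standard three-stage equilibrium analysis: locate the equilibria from the zero set of the vector field, linearise at each, and classify via trace/determinant/discriminant of the Jacobian. For the equilibria, setting $\dot x = \dot\varepsilon = 0$ and using the factorisations $\dot x = x(1-x)h(x,\varepsilon)$ with $h(x,\varepsilon) := 2x + \mu\varepsilon + \alpha + \sigma - \kappa - 1$ and $\dot\varepsilon = [\gamma(1-x)-\tau]\varepsilon$, I obtain that either $x\in\{0,1\}$ or $h=0$, and that either $\varepsilon=0$ or $x=1-\tau/\gamma$. Combining these alternatives and discarding configurations inconsistent with the invariant set $[0,1]\times\mathbb{R}_{\ge 0}$ of Lemma~\ref{lemma3}---Assumption~\ref{as:cost}(i) forbids matching $x = 1-\tau/\gamma$ with $x\in\{0,1\}$, and Assumption~\ref{as:cost}(ii) forbids the branch $\varepsilon=0$, $h=0$ because it would yield $x=(1+\kappa-\sigma-\alpha)/2>1$---leaves exactly the three equilibria stated, with $\varepsilon^*>0$ at the interior one again guaranteed by Assumption~\ref{as:cost}(ii).

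For local stability I would compute
\[
J(x,\varepsilon) = \begin{bmatrix} (1-2x)h(x,\varepsilon) + 2x(1-x) & \mu x(1-x) \\ -\gamma\varepsilon & \gamma(1-x)-\tau \end{bmatrix}.
\]
At both $(0,0)$ and $(1,0)$ the off-diagonal entries vanish because $x(1-x)=0$ and $\varepsilon=0$, so the eigenvalues are simply the diagonal entries: $\alpha+\sigma-\kappa-1<0$ paired with $\gamma-\tau>0$ at the origin, and $\kappa-\sigma-\alpha-1>0$ paired with $-\tau<0$ at $(1,0)$; each equilibrium has one positive and one negative eigenvalue, giving the claimed saddle classification. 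At the interior equilibrium $(x^*,\varepsilon^*)$, the relations $h(x^*,\varepsilon^*)=0$ and $\gamma(1-x^*)-\tau=0$ collapse the Jacobian to
\[
J^* = \begin{bmatrix} 2x^*(1-x^*) & \mu x^*(1-x^*) \\ -\gamma\varepsilon^* & 0 \end{bmatrix},
\]
whose trace $2x^*(1-x^*)$ and determinant $\gamma\mu\varepsilon^* x^*(1-x^*)$ are both strictly positive since $x^*\in(0,1)$ and $\varepsilon^*>0$. This already gives instability (both eigenvalues with positive real part); to conclude a spiral rather than an unstable node I would check that the discriminant $\mathrm{Tr}^2 - 4\det = 4x^*(1-x^*)\bigl[x^*(1-x^*) - \gamma\mu\varepsilon^*\bigr]$ is negative.

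The main obstacle is precisely this last discriminant check. Substituting $\gamma\mu\varepsilon^* = 2\tau + \gamma(\kappa-\sigma-\alpha-1)$ and bounding $x^*(1-x^*)\le 1/4$, the required inequality reduces to $(1-\tau/\gamma)(\tau/\gamma) < 2\tau + \gamma(\kappa-\sigma-\alpha-1)$; its right-hand side is strictly positive by Assumption~\ref{as:cost}, and---since for $\gamma\ge 1/2$ one has $\tau/\gamma\le 2\tau$, making the inequality immediate---the only residual work is to confirm it in the regime of small $\gamma$, where it follows from the combined contributions of $2\tau$ and $\gamma(\kappa-\sigma-\alpha-1)$ in the parameter ranges of interest. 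Everything else in the proof amounts to routine sign-tracking of Jacobian entries under Assumption~\ref{as:cost}.
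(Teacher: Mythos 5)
Your proposal follows the same route as the paper: locate the three equilibria from the factorized vector field, discard the branch $\varepsilon=0$, $h=0$ using Assumption~\ref{as:cost}(ii), and classify each equilibrium through the Jacobian. The computations for the two boundary saddles and for the trace and determinant of $J^*$ are correct and match the paper's (the paper writes the eigenvalues of $J^*$ explicitly via the quadratic formula rather than through $\mathrm{Tr}$ and $\det$, but its radicand is exactly your discriminant up to a factor of $4$). The one step you leave open, however, is the only one that actually needs an argument: the sign of the discriminant, equivalently the inequality $\tfrac{\tau}{\gamma}\bigl(1-\tfrac{\tau}{\gamma}\bigr) < 2\tau + \gamma(\kappa-\sigma-\alpha-1)$. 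Saying that for small $\gamma$ it ``follows from the combined contributions \dots in the parameter ranges of interest'' is not a proof, and your hesitation there is well founded: for $\gamma<1/2$ the inequality can fail (e.g.\ $\gamma=0.01$, $\tau=0.005$, $\kappa-\sigma-\alpha-1=0.5$ gives $0.25$ on the left versus $0.015$ on the right), in which case both eigenvalues are real and positive and the interior equilibrium is an unstable node rather than a spiral.

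The paper closes this step by rewriting the inequality as positivity of the quadratic $q(\tau)=\tau^2+\gamma(2\gamma-1)\tau+\gamma^3(\kappa-\sigma-\alpha-1)$ and asserting that its roots $\tau_\pm$ are never positive reals by Assumption~\ref{as:cost}(ii). That assertion uses that the product of the roots is positive \emph{and}, implicitly, that their sum $-\gamma(2\gamma-1)$ is nonpositive, i.e.\ it tacitly requires $\gamma\ge 1/2$ --- exactly the case you already dispatched via $\tau/\gamma\le 2\tau$. So the regime you flagged as ``residual work'' is a genuine gap, shared with the paper's own proof: the spiral classification does not follow from Assumption~\ref{as:cost} alone when $\gamma<1/2$. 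To complete your argument in the paper's spirit you would need either an additional hypothesis such as $\gamma\ge 1/2$ (or the discriminant condition $(2\gamma-1)^2<4\gamma(\kappa-\sigma-\alpha-1)$), or to weaken the conclusion of item~iii) to ``unstable equilibrium,'' which is all that the subsequent convergence analysis in Theorem~\ref{theo1} actually uses.
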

\begin{proof}
Let Assumption~\ref{as:cost} hold. Solving $ \dot{x} =0$ yields $x=0$, $x=1$ or $x = \tfrac{1}{2}(-\mu \varepsilon - \alpha - \sigma + \kappa +1)$. If $x=0$ or $x=1$, then the only solution to $\dot{\varepsilon} =0$ is $\varepsilon = 0$, giving equilibria $(x, \varepsilon) = (0,0)$ and $(x, \varepsilon) = (1,0)$. 

Now consider $x = \tfrac{1}{2}(-\mu \varepsilon - \alpha - \sigma + \kappa +1)$. By solving $0= \dot{\varepsilon} =(\gamma (1-x) - \tau) \varepsilon$, we find 
equilibrium 
\begin{equation}\label{equili}
    (x, \varepsilon) =(1- \tfrac{\tau}{\gamma},\tfrac{1}{\mu} [\tfrac{2 \tau}{\gamma} + \kappa -\sigma - \alpha -1]).
\end{equation}
Note that $\varepsilon=0$ is not an option for $x\in [0,1]$, as this gives $x = \tfrac{1}{2}( - \alpha - \sigma + \kappa +1)> 1$, by Assumption~\ref{as:cost}(ii). 

Next, we examine the local stability. First, consider the equilibrium~\eqref{equili}. Linearizing \eqref{simplesystem} around this equilibrium is equivalent to linearizing the system $(\dot{\tilde{x}}, \dot{\tilde{\varepsilon}})$ around the origin, with 
\begin{align*}
    \tilde{x} & := x-1 + \tfrac{\tau}{\gamma}\,, \\
    \tilde{\varepsilon} & := \varepsilon -\tfrac{1}{\mu} [\tfrac{2 \tau}{\gamma} + \kappa -\sigma - \alpha -1]\,. 
\end{align*}
Doing so yields
\begin{equation*}
  \begin{bmatrix}\dot{\tilde{x}} \\ \dot{\tilde{\varepsilon}} \end{bmatrix} = \begin{bmatrix} 2 \tfrac{\tau}{\gamma} (1-\tfrac{\tau}{\gamma})& \mu \tfrac{\tau}{\gamma} (1-\tfrac{\tau}{\gamma}) \\ -\tfrac{1}{\mu}(2 \tau + \gamma[\kappa - \sigma - \alpha -1  ]) & 0\end{bmatrix} \begin{bmatrix}{\tilde{x}} \\ {\tilde{\varepsilon}} \end{bmatrix}, 
\end{equation*}
where the Jacobian matrix has eigenvalues 
\begin{align}\label{eigenvalue}
  \lambda_{\pm} = & \tfrac{\tau}{\gamma} (1-\tfrac{\tau}{\gamma}) \pm \sqrt{ \tfrac{\tau^2}{\gamma^2} (1-\tfrac{\tau}{\gamma})^2 - \tfrac{\tau}{\gamma} (1-\tfrac{\tau}{\gamma}) (2\tau + \gamma [\kappa -\sigma - \alpha -1])}.
\end{align}
Note that the radicand is negative iff $\tau^2 + \gamma (2\gamma -1) \tau + \gamma^3[\kappa -\sigma - \alpha -1]>0$. The equation $\tau^2 + \gamma (2\gamma -1) \tau + \gamma^3[\kappa -\sigma - \alpha -1] = 0$ is solved by $$\tau_{\pm} = -\tfrac{1}{2}\gamma(2 \gamma -1) \pm \tfrac{1}{2}\sqrt{\gamma^2(2 \gamma -1)^2 - 4 \gamma^3[\kappa -\sigma - \alpha -1]}.$$ Observe that $\tau_{\pm} \notin \mathbb{R}_{>0}$, due to Assumption~\ref{as:cost}(ii), so $\tau^2 + \gamma (2\gamma -1) \tau + \gamma^3[\kappa -\sigma - \alpha -1]>0$ for all $\tau \in \mathbb{R}_{>0}$. Thus, the radicand in \eqref{eigenvalue} is negative and $\text{Re}( \lambda_{+}) = \text{Re}( \lambda_{-}) = \tfrac{\tau}{\gamma} (1-\tfrac{\tau}{\gamma})> 0 $ by Assumption~\ref{as:cost}(i), implying that the equilibrium under consideration is an unstable spiral.  

Next, consider $(x, \varepsilon) =(0,0)$. Linearizing the system in \eqref{simplesystem} around $(x, \varepsilon) =(0,0)$ yields a Jacobian matrix with eigenvalues $\gamma - \tau > 0$ and $\alpha + \sigma -1 - \kappa$. By Assumption~\ref{as:cost}(ii), $\alpha + \sigma -1 - \kappa < -2 < 0$, so $(x, \varepsilon) =(0,0)$ is a saddle point. 

Finally, we consider $(x, \varepsilon) =(1,0)$. Linearizing \eqref{simplesystem} around $(x, \varepsilon) =(1,0)$ gives a Jacobian matrix with eigenvalues $- \tau < 0$ and $\kappa -(\sigma + \alpha +1) > 0$ (by Assumption~\ref{as:cost}(ii)), so $(x, \varepsilon) =(1,0)$ is a saddle point.
\end{proof}

Employing the above local stability properties of the system equilibria, we derive the following (almost) global convergence result. 
\begin{theorem}\label{theo1} Let Assumption~\ref{as:cost} hold. If the initial condition $(x(0),\varepsilon(0))$ is in the interior of the domain $[0,1]\times \mathbb{R}_{\ge 0}$ and does not coincide with the interior equilibrium in \eqref{equili}, then all solutions of the system in \eqref{simplesystem} converge to a limit cycle.
\end{theorem}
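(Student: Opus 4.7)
The proof will apply the Poincaré-Bendixson theorem to \eqref{simplesystem}. Lemmas~\ref{lemma2} and~\ref{lemma3} already imply that any trajectory starting in the interior of $[0,1]\times\mathbb R_{\ge 0}$ is confined for all $t\ge 0$ to a compact subset of the domain, so its $\omega$-limit set $\Omega$ is nonempty, compact, connected, and invariant under the flow. The extended Poincaré-Bendixson theorem states that $\Omega$ is either (a) a periodic orbit, (b) a single equilibrium, or (c) a heteroclinic chain consisting of equilibria in $\Omega$ together with orbits whose $\alpha$- and $\omega$-limits are among those equilibria. The entire task therefore reduces to excluding (b) and (c), i.e., to showing that no equilibrium from Proposition~\ref{propequi} belongs to $\Omega$.

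To exclude the boundary saddles I would invoke the classical fact that if a hyperbolic saddle $p$ lies in a compact connected $\omega$-limit set with $\Omega\neq\{p\}$, then $\Omega$ contains a full branch of the unstable manifold $W^u(p)$. For $(0,0)$, $W^u$ is contained in the invariant line $\{x=0\}$ where $\dot\varepsilon=(\gamma-\tau)\varepsilon$, which is unbounded; hence $(0,0)\in\Omega$ would force $\Omega$ to be unbounded, contradicting compactness. For $(1,0)$, $W^u$ is contained in the invariant line $\{\varepsilon=0\}$ where Assumption~\ref{as:cost}(ii) gives $\dot x<0$ on $(0,1)$, so the corresponding orbit is a heteroclinic from $(1,0)$ to $(0,0)$; by closedness of $\Omega$, $(1,0)\in\Omega$ would propagate to $(0,0)\in\Omega$, reducing to the previous case. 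The degenerate alternative $\Omega=\{(0,0)\}$ (resp.\ $\Omega=\{(1,0)\}$) is ruled out because the trajectory starts in the open interior, whereas the stable manifold of each saddle lies entirely on the boundary $\{\varepsilon=0\}$ (resp.\ $\{x=1\}$).

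To rule out the interior equilibrium $p^{*}$ I would use the fact, established in Proposition~\ref{propequi}, that both eigenvalues of the Jacobian at $p^{*}$ have strictly positive real part. This makes $p^{*}$ an unstable spiral whose local stable set is trivial, so no non-trivial orbit can have $p^{*}$ as its $\omega$-limit. Supposing $p^{*}\in\Omega$, the extended Poincaré-Bendixson theorem leaves only two options. In case (b), $\Omega=\{p^{*}\}$ and the trajectory converges to an unstable equilibrium, which is impossible since the initial condition is distinct from $p^{*}$. In case (c), $\Omega$ contains a non-trivial orbit whose $\omega$-limit is an equilibrium in $\Omega$; having already excluded $(0,0)$ and $(1,0)$, that equilibrium must be $p^{*}$, contradicting the triviality of its stable set. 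Hence $p^{*}\notin\Omega$, and Poincaré-Bendixson concludes that $\Omega$ is a periodic orbit, to which the trajectory converges.

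The step that I expect to demand the most care is the boundary-saddle exclusion: one has to combine invariance of $\Omega$ with the explicit location of the stable and unstable manifolds of $(0,0)$ and $(1,0)$ to turn the mere hypothesis ``$(0,0)$ or $(1,0)$ belongs to $\Omega$'' into a genuine unbounded-orbit contradiction, being mindful of closedness of $\Omega$ when propagating $(1,0)$ into $(0,0)$. Once all three equilibria are ruled out, the final appeal to the Poincaré-Bendixson theorem is essentially routine.
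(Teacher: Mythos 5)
Your proposal is correct, but it reaches the key exclusion step by a genuinely different route than the paper. The paper works quantitatively: it fixes a thin strip $[1-\epsilon,1]\times\mathbb{R}_{\ge 0}$ near the boundary $x=1$, uses the sign structure of $\dot x$ and $\dot\varepsilon$ together with Gr\"onwall--Bellman estimates to show that any trajectory entering that strip is forced back out before it can touch $x=1$, asserts that the boundaries $x=0$ and $\varepsilon=0$ are handled ``similarly,'' rules out a homoclinic orbit via the instability of the interior equilibrium, and then invokes the generalized Poincar\'e--Bendixson theorem with the boundedness from Lemma~\ref{lemma2}. You instead argue qualitatively at the level of the $\omega$-limit set: you use the Butler--McGehee-type fact that a hyperbolic saddle in a non-trivial compact $\omega$-limit set drags a full branch of its unstable manifold into that set, and then exploit the explicit geometry of $W^u$ and $W^s$ of the two boundary saddles --- the unbounded unstable branch of $(0,0)$ along $\{x=0\}$, the heteroclinic branch of $(1,0)$ along $\{\varepsilon=0\}$ feeding into $(0,0)$, and the confinement of both stable manifolds to the (invariant) boundary --- plus the source character of the interior equilibrium. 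Your route buys a uniform, fully explicit treatment of all three equilibria and of the degenerate cases where $\Omega$ is a single equilibrium (a point the paper leaves implicit in its ``cannot reach the boundary'' phrasing), at the cost of invoking the saddle-in-limit-set lemma; the paper's route buys the stronger, quantitative statement that the trajectory itself is repelled from the boundary (machinery the authors reuse when discussing bounds on the peak of $\varepsilon$), at the cost of an unproved ``similarly'' for two of the three boundary segments. Both arguments are sound and both ultimately rest on the same Poincar\'e--Bendixson dichotomy.
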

\begin{proof}
Let Assumption~\ref{as:cost} hold, so $\kappa > \sigma +\alpha + 1$. To prove that the system in \eqref{simplesystem} converges to a limit cycle, we first need to study the behavior of \eqref{simplesystem} close to the boundary of its domain. Consider the boundary $x=1$. Let us assume that there exists a trajectory that reaches $x = 1- \epsilon$ at a time $t_0$, where $\epsilon \in (0, \tfrac{\tau}{\gamma})$ is arbitrarily infinitesimally small. Note that for $x\in [1- \epsilon,1)$ and $\varepsilon  \le \tfrac{1}{\mu} (\kappa - \sigma -\alpha -1)$, we have $2x + \mu \varepsilon +\alpha + \sigma - \kappa -1 < 1 + \mu \varepsilon +\alpha + \sigma - \kappa  \le 0$, which implies that $$\dot{x} = x(1-x) (2x + \mu \varepsilon +\alpha + \sigma - \kappa -1)<0.$$ Hence, $\dot{x}$ can only be positive for $\varepsilon > \tfrac{1}{\mu} (\kappa - \sigma -\alpha -1)$.
 Let us consider any trajectory with $x(t_0)=1-\epsilon$ that enters the region $$\mathcal{R} : = [1- \epsilon, 1] \times \big( \tfrac{1}{\mu} (\kappa - \sigma -\alpha -1 ), \varepsilon(t_0) \big].$$ We will now show that it is not possible for a trajectory in $\mathcal{R}$ to reach the boundary $x=1$. Note first that for $\varepsilon>0$ and $x> 1-\tfrac{\tau}{\gamma}$, we have $\dot{\varepsilon} = (\gamma (1-x) - \tau) \varepsilon < 0$, so $\dot{\varepsilon}<0 $ for $(x,\varepsilon) \in [1-\epsilon,1]\times \mathbb{R}_{>0}$ and the trajectory cannot exit $\mathcal{R}$ from the the top. Next, let us define $u(t) = 1-x(t)$.
Since $\varepsilon(t)\le \varepsilon(t_0)$ for any $t\ge t_0$, it follows that $\dot{x}  \le p(1-x),$ for some constant $p>0$, which is equivalent to $-\dot{u} \le -p(-u)$. 
 By the Gr\"{o}nwall-Bellman inequality~\cite{pachpatte1997inequalities}, we have $-u(t) \le -u(t_0)e^{-p(t-t_0)}$, or equivalently, $$x(t) \le 1-\epsilon e^{-p (t-t_0)}<1,$$ for any $t\geq t_0$. Next, note that in $\mathcal{R}$, $\dot{\varepsilon} \le - ( \tau- \epsilon\gamma ) \varepsilon < - \tfrac{1}{\mu}( \tau- \epsilon\gamma ) (\kappa - \sigma -\alpha -1 ).$ Thus, $|\dot{\varepsilon}| > \tfrac{1}{\mu}( \tau- \epsilon\gamma ) (\kappa - \sigma -\alpha -1  ) $. The length of the $\varepsilon$-axis in $\mathcal{R}$ is less than $ \varepsilon(t_0) -\tfrac{1}{\mu} (\kappa - \sigma -\alpha -1) $. Hence, there exists a $$\tilde{t} \le \dfrac{\mu \varepsilon(t_0) - (\kappa - \sigma -\alpha -1 )}{( \tau- \epsilon\gamma ) (\kappa - \sigma -\alpha -1 )}$$ such that $\varepsilon(t_0 + \tilde{t}) \le \tfrac{1}{\mu} (\kappa - \sigma -\alpha -1 ) $. At time $t_0+\tilde{t}$, we have $x(t_0+ \tilde{t}) \le 1-\epsilon e^{-p (\tilde{t})} <1$, and the trajectory is in the region $\mathcal{S} : = [1- \epsilon, 1] \times \big[0, \tfrac{1}{\mu} [\kappa - \sigma -\alpha -1 ]\big].$ Since $\dot{x}<0$ for $\varepsilon  \le \tfrac{1}{\mu} (\kappa - \sigma -\alpha -1)$, the trajectory will move away from the boundary and cannot reach $x=1$. 
 
 Similarly, we can show that any trajectory starting in the interior cannot reach the boundaries $\varepsilon = 0$ and $x=0$. This implies that it is impossible to reach the boundary equilibria if the initial conditions of the system are in the interior of $[0,1] \times \mathbb{R}_{\ge 0}$. 
 
 Lastly, consider the set $(x,\varepsilon) \in (0,1) \times \mathbb{R}_{> 0}$. Since the unique equilibrium in the interior is unstable, there does not exist a homoclinic orbit. Moreover, Lemma~\ref{lemma2} guarantees that all solutions are bounded. Hence, by the generalized Poincar\'{e}-Bendixson theorem~\cite{teschl2012ordinary}, every non-empty compact $\omega$-limit set of an orbit is periodic. 
 \end{proof}

\begin{figure*}
\centering

\includegraphics[width=0.99\textwidth]{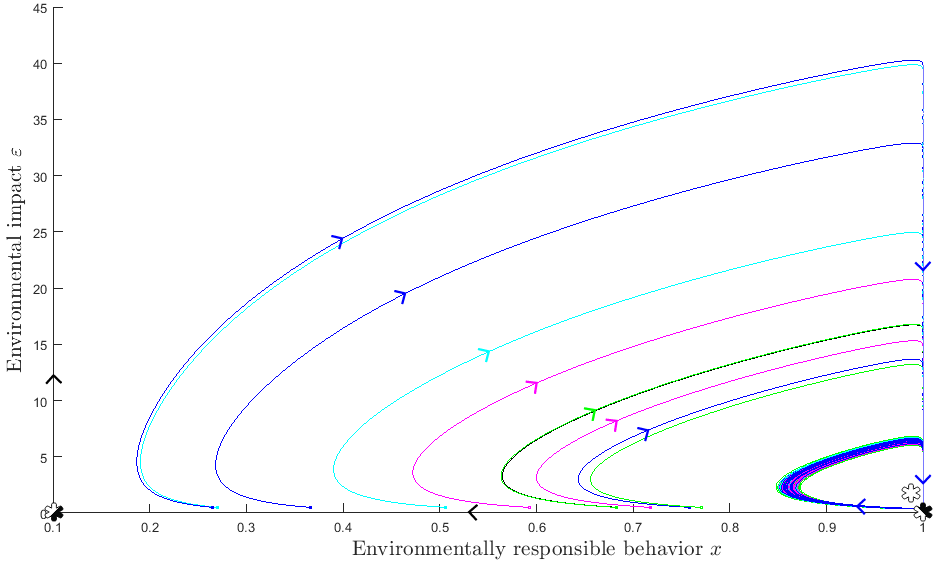}
\caption{Simulated trajectories of the system in \eqref{simplesystem} for parameter values $\alpha = 0.3$, 
$\sigma = 0.6$, 
$\kappa=3$, 
$\gamma= 10$, 
$\tau = 0.1$ and
$\mu = 0.6$. Saddle points and unstable equilibria are marked with black-white and white asterisks, respectively.\label{figtraject}}
\end{figure*}

For the sake of completeness, we report here a brief characterization of the behavior of the system if the initial condition is on the boundary of the domain. 
\begin{proposition}
Under Assumption~\ref{as:cost}, the following properties hold:
\begin{enumerate}
\item[i)] If $x(0)=1$ and $\varepsilon(0)\ge 0$, then the solution of \eqref{simplesystem} converges to the equilibrium $(1,0)$;
\item[ii)] If $x(0)<1$ and $\varepsilon(0)=0$, then the solution of \eqref{simplesystem} converges to the equilibrium $(0,0)$;
\item[iii)] If $x(0)=0$ and $\varepsilon(0)>0$, then the solution of \eqref{simplesystem} diverges toward $(0,\infty)$.
\end{enumerate}
\end{proposition}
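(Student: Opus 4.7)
The plan is to exploit the fact that each boundary set appearing in the three cases is forward-invariant under \eqref{simplesystem}: on each of $\{x=1\}$, $\{\varepsilon=0\}$ and $\{x=0\}$, one of the two right-hand sides contains a factor that vanishes identically, so the planar system collapses to a scalar linear ODE in the surviving coordinate and can be integrated by inspection.

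For case (i), the $(1-x)$ factor in $\dot{x}$ gives $x(t)\equiv 1$, reducing the second equation to $\dot{\varepsilon}=-\tau\varepsilon$; hence $\varepsilon(t)=\varepsilon(0)e^{-\tau t}\to 0$ and the solution converges to $(1,0)$. Case (iii) is dual: the $x$ factor in $\dot{x}$ vanishes, yielding $x(t)\equiv 0$ and $\dot{\varepsilon}=(\gamma-\tau)\varepsilon$, so by Assumption~\ref{as:cost}(i) we have $\gamma-\tau>0$ and $\varepsilon(t)=\varepsilon(0)e^{(\gamma-\tau)t}\to\infty$, giving divergence toward $(0,\infty)$.

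Case (ii) is slightly more delicate. Invariance of $\{\varepsilon=0\}$ gives $\varepsilon(t)\equiv 0$ and a reduced dynamics $\dot{x}=x(1-x)(2x+\alpha+\sigma-\kappa-1)$. The key step is a sign check: Assumption~\ref{as:cost}(ii) provides $\alpha+\sigma-\kappa-1<-2$, so $2x+\alpha+\sigma-\kappa-1<0$ on the whole of $[0,1]$, which makes $\dot{x}<0$ strictly on $(0,1)$. Monotone decrease combined with boundedness below by $0$ forces $x(t)$ to a limit that must be an equilibrium of the reduced ODE in $[0,1]$; the candidate root $x=(\kappa+1-\sigma-\alpha)/2$ of the bracket lies strictly above $1$ by the same assumption, leaving $x=0$ as the only admissible limit. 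I do not anticipate any substantive obstacle: the only care needed is the sign verification that rules out this spurious interior equilibrium, and all remaining steps are elementary one-dimensional ODE arguments.
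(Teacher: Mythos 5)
Your proof is correct, and it follows the same route the paper itself uses where it analyzes the boundary dynamics (the reductions $\dot\varepsilon=-\tau\varepsilon$ on $\{x=1\}$ and $\dot\varepsilon=(\gamma-\tau)\varepsilon$ on $\{x=0\}$ already appear in the proof of Lemma~\ref{lemma3}); the paper states this proposition without proof, so your case (ii) argument --- invariance of $\{\varepsilon=0\}$, the sign check $2x+\alpha+\sigma-\kappa-1<0$ on $[0,1]$ from Assumption~\ref{as:cost}(ii), and convergence of a monotone bounded scalar solution to an equilibrium --- supplies exactly the missing piece. No gaps.
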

\noindent Theorem~\ref{theo1} guarantees that almost all of the trajectories that start in the interior of the domain converge to a limit cycle. This finding is illustrated by a set of simulations, reported in Fig.~\ref{figtraject}, where it is shown that all of the simulated trajectories converge to a periodic solution. Additionally, Fig.~\ref{figtraject} suggests that before the trajectory reaches the natural oscillations in the limit cycle, the environmental impact might increase to an alarmingly high level during the transient phase, dependent on the initial system conditions. Such an observation calls for the design of optimal control strategies to influence the system trajectory in the transient regime.

\begin{figure}
\centering
\includegraphics[width=0.6\linewidth]{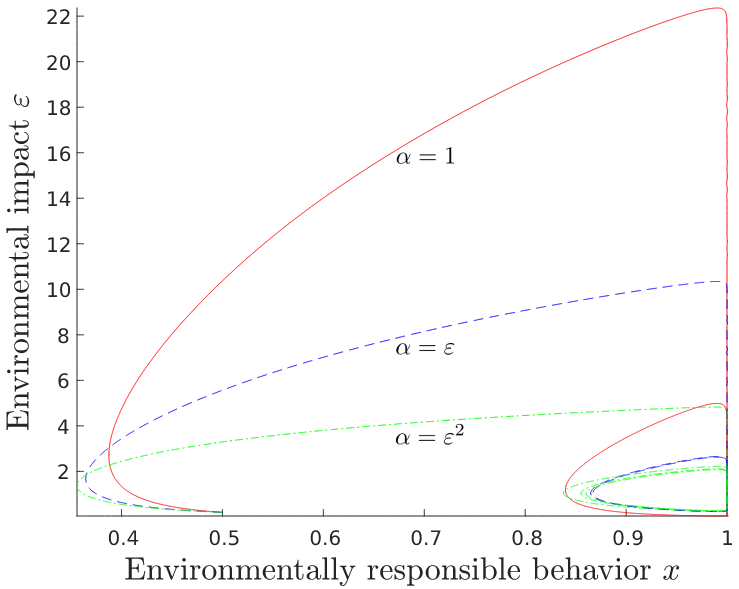}
\caption{Simulated trajectories of the system in \eqref{simplesystem} for different choices of the control function $\alpha$. Common parameter values are
$\sigma = 0.6$, 
$\kappa=3$, 
$\gamma= 10$, 
$\tau = 0.1$ and
$\mu = 0.6$. \label{control}} 
\end{figure}

In particular, we believe that optimal control strategies in terms of environmental subsidies and awareness campaigns may be designed by letting the parameters $\kappa$ and $\mathcal \alpha$ be time-varying control parameters. Specifically, the use of feedback control schemes, where we let $\alpha$ and $\kappa$ depend on $t$ through $\varepsilon(t)$, might be extremely beneficial toward mitigating the increase in environmental impact during the transient phase. This intuition is bolstered by simulations of the proposed control scheme in Fig.~\ref{control}: implementing a control action that is linearly (or even super-linearly) proportional to the environmental impact seems to be highly beneficial in reducing its peaks. The analysis of such feedback-controlled policies might be performed using arguments similar to those used in the proof of Lemma~\ref{lemma2}, through which one can estimate an upper bound on the peak of $\varepsilon(t)$ throughout the entire trajectory. Therefore, one of our future research objectives is to design optimal control strategies, potentially in feedback with the system, to guarantee that $\varepsilon(t)$ is always less than the critical threshold above which the planet becomes unsuitable for life.


\section{Conclusion}\label{sec:con}

\noindent In this paper, we proposed a novel stochastic network model that captures the coevolution of human behavior concerning environment-related issues and environmental impact. Our modeling framework includes a variety of factors such as policy interventions, negative emission technologies, social influence, a behavioral response to increases in environmental impact, and the cost of environmentally-friendly behavior. By employing a mean-field approach, we derived a deterministic approximation of the system in the limit of large-scale populations, for which we performed a complete asymptotic analysis. Specifically, we proved global convergence to a periodic solution for almost all initial conditions. 

Our modeling framework and results open up the path for several directions of future research. First, our theoretical results are derived under the simplifying Assumption~\ref{assumptionsimple} of an all-to-all network of interactions. To better approximate real-world scenarios, the behavioral-environmental feedback model can be studied by employing non-trivial social networks. Second, we assumed a linear behavioral response to the environmental impact, but more complex nonlinear functions may be considered. In particular, one may consider extending the framework to a multi-population scenario with cautious and reckless subpopulations, modeled by assigning different functions for the environmental response. By including a degree of homophily, i.e., a tendency of people to interact with like-minded individuals, one can explore the role of a polarized network structure in the evolution of the environmental population behavior. 

Third, in our original model formulation, we assumed that the policy interventions, i.e., environmental subsidies and awareness campaigns, are constant over time. As we discussed through the numerical simulations in Fig.~\ref{control}, future efforts should be placed on investigating the possibility to mitigate extreme oscillations of the system via time-varying control policies and, in particular, state-dependent policies, where the effort placed by public authorities is defined as a feedback function of the state of the environment.

\bibliographystyle{ieeetr}
\bibliography{bib}

\end{document}